\newtheorem{Th}{Theorem}
\newtheorem{Lm}{Lemma}
\title{Index stability for two dimensional bubble tree convergence with $W^{1,2}$ quantization and applications}
\title{Optimal weighted Wente's inequality}%
\author{Matilde Gianocca}
\address{Department of Mathematics, ETH Zentrum,
CH-8093 Z\"urich, Switzerland.}
\email{matilde.gianocca@math.ethz.ch}
\begin{document}

\begin{abstract}
  In this short note we show three types of weighted Wente inequalities: $W^{1,2}_0(B_1)$ solutions of $\Delta\varphi=\nabla a\cdot\nabla^\perp b$ for $a,b\in W^{1,2}(B_1)$ satisfy \[||\,|x|^\alpha\varphi||_{L^{\infty}(B_1)}^2+\int_{B_1}|x|^{2\alpha}|\nabla\varphi|^2\,dx\leq C\int_{B_1}|x|^{2\alpha}|\nabla b|^2\,dx\int_{B_1}|\nabla a|^{2}\,dx\,,\]
for any $\alpha\in (0,1)$. For the case $\alpha=1$ we show
\[||\,|x|\varphi||_{L^{\infty}(B_1)}^2\leq C\int_{B_1}|x|^{2}|\nabla b|^2\,dx\int_{B_1}|\nabla a|^{2}\,dx\,,\]
\begin{equation}\label{maineq3}
    \int_{B_1}|x|^{2}|\nabla\varphi|^2\,dx\leq C\int_{B_1}|x|^{2}|\log|x||\,|\nabla b|^2\,dx\int_{B_1}|\nabla a|^{2}\,dx\,.
\end{equation}
The last inequality is an improvement of a previous result obtained by the author in collaboration with F. Da Lio and T. Rivière in \cite{DGR} using different methods.
The case $\alpha=0$ reduces to the original Wente estimate and is therefore already established. We prove in Theorem 2 that \eqref{maineq3} is optimal.
\end{abstract}

\maketitle
 \thispagestyle{empty}
\section*{Introduction}
\noindent Let us consider weak solutions of the Poisson equation \[\Delta f = g\] on the unit ball $B_1\subset\mathbb{R}^2$ with Dirichlet boundary conditions $f=0$ on $\partial B_1$. Under the assumption that $f\in L^1(B_1)$ one cannot expect $f\in W^{1,2}(B_1)$ nor $f\in L^{\infty}(B_1)$ in general. The optimal a-priori estimates are
\[f\in \bigcap\limits_{1\leq q <\infty}L^q(B_1)\quad\text{and}\quad \nabla f\in\bigcap\limits_{1\leq p<2}L^p(B_1)\,,\]
as well as
\[f\in \text{BMO}(B_1)\quad \text{and}\quad \nabla f\in L^{2,\infty}(B_1)\,.\]
We recall that the weak $L^2$ space $L^{2,\infty}(B_1)\supsetneq L^2(B_1)$, is given by
\[L^{2,\infty}(B_1)=\{f\,\text{measurable, s.t. }\sup\limits_{\lambda\geq 0}\lambda|\{x\in B_1\,\vert\,|f(x)|\geq\lambda\}|^{1/2}<\infty\}\,.\]
In \cite{Wen} H. Wente discovered improved regularity properties for functions whose Laplacian is a Jacobian determinant, i.e. for solutions of 
\begin{equation}\label{wenteequation}
    \begin{cases}
        \Delta \varphi =\nabla a\cdot\nabla^\perp b\quad&\text{on }B_1\\[.5em]
        \,\,\,\,\varphi=0 \quad&\text{on }\partial B_1\,.
    \end{cases}
\end{equation}
Assuming that $a,b\in W^{1,2}(B_1)$ where $B_1\subset\mathbb{R}^2$ denotes the unit disk, one has that the Jacobian $\nabla^\perp a\cdot\nabla b$ is in $L^1$, and as explained above having $\Delta\varphi\in L^1(B_1)$ is not sufficient to obtain $L^2$ integrability of $\nabla\varphi$ a priori. Wente's results show that thanks to the additional structure of the Jacobian, $\varphi$ is indeed in $W^{1,2}\cap L^\infty$ and the following estimate holds:
\begin{equation}\label{originalresultwente}
    ||\varphi||_{L^\infty(B_1)}+||\nabla \varphi||_{L^2(B_1)}\leq C||\nabla a||_{L^2(B_1)}||\nabla b||_{L^2(B_1)}\,.
\end{equation}
In \cite{BG} F. Bethuel and J. M. Ghidaglia generalized the estimate \eqref{originalresultwente} to arbitrary open domains $\Omega\subset\mathbb{R}^2$, not necessarily conformal to the open disk (in this case the estimate would be immediate since the equation and the norms involved are all invariant under conformal transformations).\\
In \cite{Ge} Y. Ge found the optimal constants for the inequality
\begin{equation*}
    ||\nabla\varphi||_{L^2(\mathcal{M})}\leq C_\mathcal{M}||\nabla a||_{L^2(\mathcal{M})}||\nabla b||_{L^2(\mathcal{M})}
\end{equation*}
for any smooth two dimensional Riemann Surface\footnote{both the cases $\partial \mathcal{M}=\varnothing$ and $\partial \mathcal{M}\neq\varnothing$ have been covered, in the first case the Dirichlet boundary condition is replaced by $\varphi(x_0)=0$ for some $x_0\in \mathcal{M}$\,.} $\mathcal{M}$, showing a connection between Wente's inequality and the isoperimetric inequality in $\mathbb{R}^3$. The optimal constant for the supremum norm $||\varphi||_{L^\infty(\mathcal{M})}$ was studied by S. Baraket \cite{Bar}, P. Topping \cite{Top} and H. Wente \cite{Wen2} and is now known to be $\frac{1}{2\pi}$ for every Riemann Surface $\mathcal{M}$, that is
\[||\varphi||_{L^\infty(\mathcal{M})}\leq\frac{1}{2\pi}||\nabla a||_{L^2(\mathcal{M})}||\nabla b||_{L^2(\mathcal{M})}\]\\
It has been shown by F. Da Lio and F. Palmurella in \cite{DLP} and independently by J. Hirsch in \cite{Hir}, that uniform estimates of the type \eqref{originalresultwente} for $||\nabla\varphi||_{L^2}$ and $||\varphi||_{L^\infty}$ of solutions of Jacobian equations with Neumann boundary conditions
\begin{equation*}
    \begin{cases}
        \,\Delta \varphi =\nabla a\cdot\nabla^\perp b\quad&\text{on }B_1\\[.5em]
        \partial_\nu\varphi=0 \quad&\text{on }\partial B_1\,.
    \end{cases}
\end{equation*}
do not exist in general.\\
Lorentz spaces turned out to play an important role in the study of compensation phenomena. We recall the definition of the $(p,q)$-quasinorms for $0<p,q<\infty$,
\[||f||_{L^{p,q}(B_1)}=p^{1/q}\bigg(\int_0^\infty t^q|\{x\,:\,|f(x)|\geq t\}|^{q/p}\,\frac{dt}{t}\bigg)^{1/q}\,.\]
The inequalities discovered by Wente were later studied and generalized by R. Coifmann, P.-L. Lions, Y. Meyer and S. Semmes in \cite{CLMS}, who showed that solutions of \eqref{wenteequation} satisfy
\begin{equation}\label{CLMSeq}
   ||\nabla\varphi||_{L^{2,1}(B_1)} +||\nabla^2\varphi||_{L^1(B_1)}\leq C||\nabla a||_{L^2(B_1)}||\nabla b||_{L^2(B_1)}.
\end{equation}
A related result due to F. Bethuel \cite{Bet} is
\begin{equation*}
    ||\nabla\varphi||_{L^{2}(B_1)}\leq C||\nabla a||_{L^{2,\infty}(B_1)}||\nabla b||_{L^2(B_1)}\,,
\end{equation*}
where the Jacobian $\nabla a\cdot \nabla^\perp b=\text{div}(a\nabla^\perp b)$ should be understood in the distributional sense for $\nabla a\in L^{2,\infty}$. \\[.5em]
Motivated by geometric quantization problems, P. Laurain and T. Rivière studied the Wente inequality on degenerating annuli,
\begin{equation*}
    \begin{cases}
        \Delta \varphi =\nabla a\cdot\nabla^\perp b\quad&\text{on }B_1\setminus B_\epsilon\\[.5em]
        \,\,\,\,\varphi=0 \quad&\text{on }\partial (B_1\setminus B_\epsilon)\,
    \end{cases}
\end{equation*}
and obtained\footnote{the original result is stated for $||\nabla\varphi||_{L^{2,1}(B_1\setminus B_{2\epsilon})}$, however the original proof together with a scaling argument gives the statement as it is presented here.} (see \cite{LR})
\[||\nabla\varphi||_{L^{2,1}(B_1\setminus B_\epsilon)}\leq C||\nabla a||_{L^2(B_1\setminus B_\epsilon)}||\nabla b||_{L^2(B_1\setminus B_\epsilon)}\]
for a constant $C$ independent of $\epsilon$.
\\[.5em]Wente-type estimates for solutions of \eqref{wenteequation} of the type
\begin{equation}
    ||\nabla \varphi||_{L^{p,q}(B_1)}\leq C_{p,q}||\nabla a||_{L^{2,\infty}(B_1)}||\nabla b||_{L^{p,q}(B_1)}
\end{equation}
for $p\in (1,\infty)$ and $q\in[1,\infty]$ have been shown by Y. Ge in \cite{Ge2} and with different methods by Y. Bernard and T. Rivière in \cite{BR}.
An exposition of Wente's inequality and related problems can also be found in Chapter 3 of F. Hélein's book \cite{Hel}.\\[0.5em]
In a recent work \cite{DGR}, the author in collaboration with F. Da Lio and T. Rivière  proved the following version of the weighted Wente inequality: for solutions $\varphi$ of equation \eqref{wenteequation}, it holds that
\begin{equation}\label{notoptimalweightedwente}
    \int_{B_1}|x|^2|\nabla\varphi|^2\,dx\leq C\int_{B_1}f(|x|)|\nabla b|^2\,dx\int_{B_1}|\nabla a|^2\,dx
\end{equation}
with the weight
\begin{equation}
    f(|x|)=|x|^2\log^2(1+|x|^{-1})\log(1+\log (|x|^{-1}))\,.
\end{equation}
This version of the weighted Wente inequality was fundamental in the study of Morse index stability for conformally invariant Langrangians presented in \cite{DGR}.\\[.5em]
In this note we study on the one hand the $L^\infty$ version of the weighted Wente inequality for weights $|x|^\alpha$, $\alpha\leq 1$. On the other hand we prove optimal $W^{1,2}$ weighted Wente estimates, which show that the weight $f$ obtained in \cite{DGR} was not sharp. More precisely,
\begin{framed}
\begin{Th}\label{wentetheorem}
    Let $\varphi\in W^{1,2}_0(B_1)$ be the unique solution of
    \begin{equation*}
    \begin{cases}
        \Delta \varphi =\nabla a\cdot\nabla^\perp b\quad&\text{on }B_1\\[.5em]
        \,\,\,\,\varphi=0 \quad&\text{on }\partial B_1\,,
    \end{cases}
    \end{equation*}
    for $a,b\in W^{1,2}(B_1)$, where $B_1\subset\mathbb{R}^2$ is the two-dimensional open disk. Then for any $0<\alpha<1$ there exists a constant $C_\alpha>0$, s.t.
    \begin{equation*}   ||\,|x|^\alpha\varphi||_{L^\infty(B_1)}^2+\int_{B_1}|x|^{2\alpha}|\nabla\varphi|^2\,dx\leq C_\alpha\int_{B_1}|x|^{2\alpha}|\nabla b|^2\,dx\int_{B_1}|\nabla a|^2\,dx\,.
    \end{equation*}
    Moreover there exists a universal constant $C>0$ such that
      \begin{equation*}   ||\,|x|\varphi||_{L^\infty(B_1)}^2\leq C\int_{B_1}|x|^{2}|\nabla b|^2\,dx\int_{B_1}|\nabla a|^2\,dx\,.
    \end{equation*}
        \begin{equation*}   \int_{B_1}|x|^{2}|\nabla\varphi|^2\,dx\leq C\int_{B_1}|x|^{2}|\log|x||\,|\nabla b|^2\,dx\int_{B_1}|\nabla a|^2\,dx\,.
    \end{equation*}
\end{Th}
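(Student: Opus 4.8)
The plan is to avoid the naive energy identity and instead work from an explicit representation. Testing the equation against $|x|^2\varphi$ gives
\[\int_{B_1}|x|^2|\nabla\varphi|^2\,dx=2\int_{B_1}\varphi^2\,dx-\int_{B_1}|x|^2\varphi\,\nabla a\cdot\nabla^\perp b\,dx,\]
and the term $2\int_{B_1}\varphi^2$ carries no weight and is not controlled by the target right-hand side: any radial weight $w\sim|x|^2$ is necessarily subharmonic near the origin, so the sign of $\tfrac12\int\varphi^2\Delta w$ cannot be turned to our advantage. I would therefore pass to polar coordinates $x=re^{i\theta}$ and expand $a=\sum_n a_n(r)e^{in\theta}$, $b=\sum_m b_m(r)e^{im\theta}$, $\varphi=\sum_k\varphi_k(r)e^{ik\theta}$, so that the Jacobian decouples the problem into the family of ODEs
\[\varphi_k''+\frac1r\varphi_k'-\frac{k^2}{r^2}\varphi_k=\frac{i}{r}\sum_{n+m=k}\bigl(m\,a_n'b_m-n\,a_nb_m'\bigr),\qquad \varphi_k(1)=0,\]
with $\varphi_k$ regular at the origin.

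Next I would solve each mode by its Green's function $G_k(r,s)$, built from the homogeneous solutions $r^{|k|},r^{-|k|}$ (and $1,\log r$ when $k=0$), and exploit the algebraic structure of the source. In particular for $k=0$ the right-hand side is the total derivative $-\tfrac{i}{r}\tfrac{d}{dr}\sum_n n\,a_nb_{-n}$, so that $r\varphi_0'$ is an antiderivative of a bilinear expression in the coefficients. Writing the weighted Dirichlet energy in Fourier form,
\[\int_{B_1}|x|^2|\nabla\varphi|^2\,dx=2\pi\sum_k\int_0^1\Bigl(r^2|\varphi_k'|^2+k^2|\varphi_k|^2\Bigr)r\,dr,\]
the task reduces to estimating each integral by the corresponding weighted pieces of $\int_{B_1}|\nabla a|^2$ and $\int_{B_1}|x|^2|\log|x||\,|\nabla b|^2$, and then summing in $k$.

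The logarithm is an endpoint phenomenon. For the weight $|x|^{2\alpha}$ with $0<\alpha<1$ the relevant kernel integrals converge and yield the first inequality with a constant blowing up like $(1-\alpha)^{-1}$ as $\alpha\to1$; this is the interior range in which $|x|^{2\alpha}$ behaves like an admissible (Muckenhoupt $A_2$) weight. At the borderline $\alpha=1$ the weight $|x|^2$ sits exactly at the endpoint, and the kernel estimates develop a borderline $\int dr/r$ divergence. The clean way to absorb it into a single power of logarithm is to cut $B_1$ into dyadic annuli $A_k=\{2^{-k-1}<|x|\le2^{-k}\}$ and to estimate the contribution of scale $A_k$ through the $\epsilon$-independent annular Wente estimate of Laurain and Rivière: each of the $\sim|\log|x||$ scales contributes a uniformly bounded amount, and their sum reproduces precisely the factor $|x|^2|\log|x||$ on the $b$-side. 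This is exactly where the argument improves on \cite{DGR}, whose weight $|x|^2\log^2(1+|x|^{-1})\log(1+\log|x|^{-1})$ reflects a lossier accounting of the same scales.

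The main obstacle is to obtain \emph{exactly one} logarithm rather than a higher power. This forces two things simultaneously. First, the per-mode (equivalently per-scale) estimates must be uniform in $k$ and must genuinely use the Jacobian cancellation encoded in $\sum_{n+m=k}(m\,a_n'b_m-n\,a_nb_m')$, rather than the pointwise majorant $|\nabla a|\,|\nabla b|$; without this cancellation the borderline integral is not merely logarithmic. Second, the summation over scales must be arranged so that the off-diagonal interactions between distinct dyadic annuli remain summable and do not generate an extra logarithm. Controlling these cross-terms and matching the endpoint divergence to a single factor $|\log|x||$ with a universal constant is the crux; once the sharp per-scale bound is established, the summation in $k$ and the passage back from the Fourier side to $\int_{B_1}|x|^2|\nabla\varphi|^2$ are routine.
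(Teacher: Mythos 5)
There is a genuine gap: your text is a program, not a proof. By your own admission the crux --- ``the per-mode estimates must be uniform in $k$ and must genuinely use the Jacobian cancellation \dots Controlling these cross-terms and matching the endpoint divergence to a single factor $|\log|x||$ \dots is the crux'' --- is left unestablished. The Fourier set-up (the mode ODEs, the Green's functions $G_k$, the total-derivative structure for $k=0$) is correct but is only a restatement of the problem in polar coordinates; none of the kernel estimates, the claimed $(1-\alpha)^{-1}$ blow-up, or the summation in $k$ is carried out, and the convolution structure $\sum_{n+m=k}(m\,a_n'b_m-n\,a_nb_m')$ makes the per-mode bilinear estimates precisely the hard part. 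Moreover, the tool you invoke at the endpoint is the wrong one: the Laurain--Rivi\`ere estimate concerns solutions of the Dirichlet problem on a degenerating annulus, with $\varphi=0$ on \emph{both} boundary circles, uniformly in the conformal class. The restriction of $\varphi$ to a dyadic annulus $A_k$ satisfies no such boundary condition, so you cannot apply that theorem scale by scale without first constructing and controlling harmonic corrections --- which is where all the work actually lies. Finally, the theorem also asserts weighted $L^\infty$ bounds (including the case $\alpha=1$ with weight $|x|^2$ and no logarithm); your proposal never addresses these at all.

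For comparison, the paper's mechanism for the single logarithm is different from what you describe, although your heuristic ``each scale contributes $O(1)$'' is morally right. The paper decomposes $\nabla b=\sum_j\nabla b_j$ with $\mathrm{spt}(\nabla b_j)$ in a dyadic annulus (Lemma E.2 of \cite{DGR}), localizes $a$ as well by a cut-off and Poincar\'e, and then studies the Newtonian potential $\tilde u$ of each localized Jacobian. Because the source is $\mathrm{div}\bigl((a-c_j)\nabla^\perp b_j\bigr)$ with $(a-c_j)$ controlled by Poincar\'e at scale $2^{-j}$, one gets the far-field bounds $|\tilde u|(x)\lesssim (|x|-2^{-j})^{-1}2^{-j}\|\nabla a\|_2\|\nabla b_j\|_2$ and $|\nabla\tilde u|(x)\lesssim(|x|-2^{-j})^{-2}2^{-j}\|\nabla a\|_2\|\nabla b_j\|_2$; each dyadic annulus exterior to the support then contributes a uniformly bounded amount to $\int|x|^2|\nabla\tilde u|^2$, and summing the $\sim j$ exterior annuli yields the factor $j\approx|\log|x||$ evaluated on $\mathrm{spt}(\nabla b_j)$, i.e.\ exactly one logarithm sitting on the $b$-side. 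The comparison with $\varphi_j$ is by the maximum principle, the $L^\infty$ statements come from the same pointwise bounds, and the cross-terms $\sum_{i\neq j}$ are disposed of by Cauchy--Schwarz together with the bounded overlap of the supports of $a_i$, $b_i$ (almost orthogonality), not by any delicate cancellation. If you want to salvage your Fourier route, you would need to prove the uniform per-mode kernel bounds yourself; the paper's localization-plus-potential argument is the elementary way around exactly the difficulties you flagged but did not resolve.
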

\end{framed}
\begin{framed}
    \begin{Th}
        Let $\varphi\in W^{1,2}_0(B_1)$ be the unique solution of
    \begin{equation*}
    \begin{cases}
        \Delta \varphi =\nabla a\cdot\nabla^\perp b\quad&\text{on }B_1\\[.5em]
        \,\,\,\,\varphi=0 \quad&\text{on }\partial B_1\,,
    \end{cases}
    \end{equation*}
    for $a,b\in W^{1,2}(B_1)$, where $B_1\subset\mathbb{R}^2$ is the two-dimensional open disk. Denote by $\mathcal{A}$ the set of all such $\varphi$, for varying $a,b\in W^{1,2}(B_1)$. Then,
    \begin{equation*}
        \forall \beta<1:\quad\sup\limits_{\varphi\in\mathcal{A}}\frac{||\,|x|\nabla\varphi||_2}{||\nabla a||_2||\,|x|\,|\log|x||^{\beta/2}\nabla b||_2}=\infty\,.
    \end{equation*}
    \end{Th}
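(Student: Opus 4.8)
The plan is to exhibit an explicit one--parameter family $(a_\epsilon,b_\epsilon)_{0<\epsilon<1}$ of admissible pairs for which the quotient diverges as $\epsilon\to 0$. I would first record how the three quantities transform under the conformal change $x=e^{-t}(\cos\theta,\sin\theta)$, which identifies $B_1\setminus\{0\}$ with the half--cylinder $(0,\infty)\times S^1$ with coordinates $(t,\theta)$, $t=-\log|x|$. Since the Dirichlet energy and the Jacobian two--form $\nabla a\cdot\nabla^\perp b\,dx$ are conformally invariant, the equation keeps the form $\Delta_{\mathrm{cyl}}\varphi=\nabla_{\mathrm{cyl}}a\cdot\nabla_{\mathrm{cyl}}^\perp b$ with $\varphi=0$ on $\{t=0\}$, while $\int_{B_1}|\nabla a|^2=\int|\nabla_{\mathrm{cyl}}a|^2$, $\int_{B_1}|x|^2|\nabla\varphi|^2=\int e^{-2t}|\nabla_{\mathrm{cyl}}\varphi|^2$, and $\int_{B_1}|x|^2|\log|x||^\beta|\nabla b|^2=\int e^{-2t}t^{\beta}|\nabla_{\mathrm{cyl}}b|^2$. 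In these variables the reason $\beta=1$ is the threshold becomes transparent: for large $t$ one has $t^\beta\ll t$, so concentrating $\nabla b$ at small spatial scales (large $t$, i.e. near the origin) is strictly cheaper for $\beta<1$ than it is in the sharp weight of Theorem \ref{wentetheorem}, whereas the numerator weight $e^{-2t}$ is untouched. The construction must convert this per--scale discount into an unbounded gain, and the natural way to do so is to distribute the building blocks over a logarithmically long range of scales $e^{-T}\le|x|\le1$, $T:=\log(1/\epsilon)$, so that the discount accumulates to a power of $T$.

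Accordingly I would take the family self--similar across the scales $t\approx k$, $k=1,\dots,T$, superposing rescaled copies of one fixed elementary pair $(\alpha,\beta)$ (a compactly supported Wente pair of unit size), all carrying a common angular frequency. The point of a common frequency is that the resulting pieces of $\varphi$ should add to the \emph{same} radial/low--mode profile \emph{coherently} rather than being mutually orthogonal: coherence is what turns a sum of $T$ contributions into a factor $T^2$ in $\int e^{-2t}|\nabla_{\mathrm{cyl}}\varphi|^2$, instead of the factor $T$ that orthogonal superposition would give and that merely cancels against the denominators. With amplitudes tuned so that $\int|\nabla a_\epsilon|^2$ and the unweighted energy of $b_\epsilon$ stay comparable across scales, a direct computation (Fourier decomposition in $\theta$, followed by the explicit radial Green's function of the modes of $\Delta_{\mathrm{cyl}}$) should yield $\int|\nabla a_\epsilon|^2\sim 1$, $\int e^{-2t}t^\beta|\nabla b_\epsilon|^2\sim T^{\beta}$, and, crucially, $\int e^{-2t}|\nabla\varphi_\epsilon|^2\gtrsim T$. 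Inserting these into the quotient gives the order
\[
\frac{\||x|\nabla\varphi_\epsilon\|_2}{\|\nabla a_\epsilon\|_2\,\|\,|x|\,|\log|x||^{\beta/2}\nabla b_\epsilon\|_2}\;\gtrsim\;\frac{T^{1/2}}{1\cdot T^{\beta/2}}\;=\;\Big(\log\tfrac1\epsilon\Big)^{\frac{1-\beta}{2}}\xrightarrow[\epsilon\to0]{}\infty ,
\]
which is exactly the expected signature: the exponent is positive precisely when $\beta<1$, is $0$ at the sharp value $\beta=1$ (consistent with Theorem \ref{wentetheorem}), and is negative for $\beta>1$.

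The main obstacle is the lower bound $\int e^{-2t}|\nabla\varphi_\epsilon|^2\gtrsim T$: upper bounds on all three integrals are routine once the profiles are fixed, but extracting a matching \emph{lower} bound on the weighted energy of $\varphi_\epsilon$ requires controlling the nonlocal solution operator and, above all, ruling out destructive interference between the scales, so that the coherent factor $T^2$ in the numerator genuinely survives. A clean way to make this tractable is to linearize: fix $a_\epsilon\equiv a$ to be a single suitable function (for instance a coordinate function, so that $\Delta\varphi=-\partial_2 b$ and $\varphi$ depends linearly on $b$), which reduces the task to proving that the associated linear map $b\mapsto |x|\nabla\varphi$ is unbounded from the weighted space $\||x|\,|\log|x||^{\beta/2}\nabla b\|_2$ to $L^2$; the lower bound then follows from testing $\nabla\varphi$ against a well--chosen vector field and isolating the resonant low--angular mode at each scale. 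Two further points must be checked: admissibility, i.e. $a_\epsilon,b_\epsilon\in W^{1,2}(B_1)$ for each fixed $\epsilon$ (the family only approaches the non--$W^{1,2}$ borderline profile in the limit), and consistency with the unweighted Wente inequality $\|\nabla\varphi\|_2\lesssim\|\nabla a\|_2\|\nabla b\|_2$, which forces the \emph{unweighted} energy of $b_\epsilon$ to grow and therefore dictates how the amplitudes across scales may be chosen; reconciling this growth with the bounded weighted norm of $b_\epsilon$ is the delicate balancing at the heart of the argument.
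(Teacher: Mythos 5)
Your identification of the mechanism is correct, and so is the numerology: the quotient should diverge like $(\log\tfrac1\epsilon)^{(1-\beta)/2}$, with exponent vanishing exactly at the sharp weight $\beta=1$; moreover your closing suggestion --- freeze one factor to a coordinate function so that $\varphi$ depends linearly on the other --- is, after exchanging the roles of $a$ and $b$ via the antisymmetry of the Jacobian, exactly how the paper sets up its counterexample ($\tilde b=y$ fixed, $\tilde a$ concentrated at one scale). The genuine gap is that the single estimate carrying the whole theorem, the lower bound $\int e^{-2t}|\nabla_{\mathrm{cyl}}\varphi_\epsilon|^2\gtrsim T$, is nowhere proved: it enters only through ``a direct computation should yield'' and ``the lower bound then follows from testing against a well-chosen vector field'', with no elementary pair written down, no amplitudes across scales specified, and no argument excluding destructive interference among the $T$ pieces of the nonlocal solution operator. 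This is not a formality. The far field of $\varphi$ is governed by the first moment (dipole moment) of the Jacobian, a \emph{signed} quantity that can be anomalously small even for data that look like perfectly good unit-size Wente blocks; when it is, the coherent tail you are counting on simply is not there. This is also why the multiscale superposition is riskier than necessary: you must guarantee that all $T$ dipole contributions carry a common sign. Until that lower bound is established, what you have is a correct plan, not a proof.

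For comparison, the paper uses a single source at one scale $s$ and no superposition at all: the harmonic function $h=(\tfrac1r-r)\cos\theta$ already satisfies $\int_{B_1\setminus B_s}|x|^2|\nabla h|^2\gtrsim|\log s|$ by \eqref{growthofcriticallyweightedharmonicpart} --- the coherent accumulation over scales that you try to engineer by superposition is produced automatically by the harmonic tail of one source at the innermost scale --- and it glues $h$ along $\partial B_s$ to the explicit inner Wente solution $K\,x|x|^\alpha$ (data $\tilde b=y$, $\tilde a$ concentrated, with $\|\,|x|\,|\log|x||^{\beta/2}\nabla \tilde a\|_2^2\lesssim|\log s|^\beta$), so that every integral is computable in closed form. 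One caveat, relevant to both routes: a glued function is a genuine weak solution only if the \emph{normal derivative is continuous} across $\partial B_s$, and here the signs are delicate --- the outer profile $\tfrac1r-r$ is decreasing in $r$ while the inner profile $r^{1+\alpha}$ is increasing, so their radial derivatives cannot literally agree at the interface, and one must check carefully what the matching condition actually enforces; an error here changes the dipole moment of the effective source and hence the size of the tail. The sign-proof way to finish, which is also the rigorous version of your ``resonant low-angular mode'' step and needs only a single scale, is to argue through the moment: take $a(x)=\phi(x/s)$ and $b(x)=\psi(x/s)$ with $\phi,\psi$ fixed unit-energy bumps chosen so that $d:=\int y\,\nabla a\cdot\nabla^\perp b\,dy$ has $|d|\sim s$ (explicit bumps achieve this). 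The Green representation gives, for $|x|\gg s$, $\varphi(x)=-\tfrac{1}{2\pi}\,d\cdot\bigl(\tfrac{x}{|x|^2}-x\bigr)+\text{lower-order terms}$, and orthogonality of the Fourier modes in $\theta$ isolates this mode to yield $\int_{B_1}|x|^2|\nabla\varphi|^2\gtrsim|d|^2|\log s|\sim s^2|\log s|$, while $\|\nabla a\|_2\sim1$ and $\|\,|x|\,|\log|x||^{\beta/2}\nabla b\|_2^2\lesssim s^2|\log s|^\beta$. The quotient is then $\gtrsim|\log s|^{(1-\beta)/2}\to\infty$, which is the statement of the theorem.
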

\end{framed}

\noindent No other results of weighted Wented estimates of this form are known to the author.\\
Wente inequalities on weighted Sobolev spaces have been shown by S. Baraket and L. Chaabane in \cite{BC}. Despite the similar name, these results are fundamentally different from the ones mentioned above, since in their result, given a weight $w$, the integrability assumptions 
\[||\nabla a||^2_{L^2_w(B_1)}=\int_{B_1}w|\nabla a|^2\,dx\,<\infty\quad\text{and}\quad\,||\nabla b||^2_{L^2_{w^{-1}}(B_1)}=\int_{B_1}w^{-1}|\nabla b|^2\,dx<\infty\]
are made and the inequality
\begin{equation*}
    ||\varphi||_{L^\infty(B_1)}\leq C||\nabla a||_{L_{w}^2(B_1)}||\nabla b||_{L_{w^{-1}}^2(B_1)}
\end{equation*}
is then considered.
\vspace{0.5cm}\\
\textbf{Acknowledgments.} The author would like to thank F. Da Lio and T. Rivière for fruitful discussions on the topic as well as for their useful remarks on previous versions of the paper.

\section*{Proof of weighted Wente's inequality}
We prove Theorem \ref{wentetheorem}.
\begin{proof} By Lemma E.2 in \cite{DGR} there exists a dyadic decomposition
\begin{equation}\label{l2convergenceofbj}
    \nabla b=\sum\limits_{j=0}^\infty \nabla b_j
\end{equation}
converging in $L^2$, with 
\begin{equation*}
     spt(\nabla b_j)\subset A_j\cup A_{j+1}=:\tilde{A}_j
\end{equation*}
for the dyadic annuli 
\begin{equation}\label{notationannuli}
    A_j:= B_{2^{-j}}\setminus B_{2^{-j-1}}
\end{equation}
and
\begin{equation}\label{l2boundforbj}
    \int_{B_1}|\nabla b_j|^2\,dx\leq C\int_{\tilde{A}_j}|\nabla b|^2\,dx\,.
\end{equation}
Introducing
    \begin{equation}\label{definitionvarphij}
    \begin{cases}
        \Delta \varphi_j =\nabla a\cdot\nabla^\perp b_j\quad&\text{on }B_1\\[.5em]
        \,\,\,\,\varphi=0 \quad&\text{on }\partial B_1\,,
    \end{cases}
    \end{equation}
we obtain a $L^2$ decomposition of $\varphi$, i.e.
\begin{equation*}
    \varphi=\sum\limits_{j=0}^\infty\varphi_j\,,
\end{equation*}
    where the series converges in $L^2$. Each of the elements in the series has Laplacian in the Hardy space and satisfies Dirichlet boundary conditions. Moreover, the support of the Laplacian of $\varphi_j$ is an annulus with fixed conformal class and radius comparable to $ 2^{-j}$.\\    
   
    \noindent The standard Wente inequality can be applied to each element of the series to obtain
    \begin{equation}\label{wenteforphij}
        \int_{B_1}|\nabla\varphi_j|^2\,dx\leq\,C \int_{B_1}|\nabla b_j|^2\,dx\int_{B_1}|\nabla a|^2\,dx
    \end{equation}
    for a constant $C$ independent of $j$. \\
    In fact the contribution of $\nabla a$ in \eqref{wenteforphij} can be localized: by construction 
    \begin{equation}
         spt(\nabla b_j)\subset \tilde{A}_j=B_{2^{-j}}\setminus B_{2^{-j-2}}
    \end{equation} and choosing a radial cut-off function 
    \begin{equation}
        \psi_j\in[0,1],\quad  spt(\nabla\psi_j)\subset [2^{-j},2^{-j}-2^{-j-3}]\cup[2^{-j-3},2^{-j-2}] ,\quad \psi_j\equiv 1\,\,\text{on}\,\, [2^{-j-2},2^{-j}]\,,
    \end{equation}
    with
    \begin{equation}
        \int_{B_1}|\nabla\psi_j|^2\,dx\leq C\,,
    \end{equation}
    setting $a_j:=\psi_j (a-c_j)$, for a constant $c_j$ to be chosen later,
    \begin{equation}
        \nabla a\cdot\nabla^\perp b_j \overset{L^1}{=}\nabla a_j\cdot\nabla^\perp b_j\,,
    \end{equation}
    \begin{equation}
        \nabla a_j = \psi_j\nabla a + (a-c_j)\nabla \psi_j\,.
    \end{equation}
    Introducing the notation $C_j:= spt( \psi_j)$,
    \begin{equation}
        \int_{B_1}|\nabla a_j|^2\,dx\leq \int_{C_j}|\nabla a|^2\,dx+ C\fint_{C_j}(a-c_j)^2\,dx\leq C\int_{C_j}|\nabla a|^2\,dx\,,
    \end{equation}
    for the appropriate choice of $c_j$ in the Poincaré inequality used in the last step.\\
    In particular,
     \begin{equation}\label{varphijequationbothlocalized}
    \begin{cases}
        \Delta \varphi_j =\nabla a_j\cdot\nabla^\perp b_j\quad&\text{on }B_1\\[.5em]
        \,\,\,\,\varphi=0 \quad&\text{on }\partial B_1\,,
    \end{cases}
    \end{equation}
    and the Wente inequality \eqref{wenteforphij} can therefore be improved to obtain the localized version
     \begin{equation}\label{wenteforphijlocalized}
        \int_{B_1}|\nabla\varphi_j|^2\,dx\leq\,C \int_{B_1}|\nabla b_j|^2\,dx\int_{B_1}|\nabla a_j|^2\,dx\leq\,C \int_{B_1}|\nabla b_j|^2\,dx\int_{C_j}|\nabla a|^2\,dx\,.
    \end{equation}
    We will first prove that the weighted Wente inequality holds for each $\varphi_j$, for $\alpha\in(0,1]$,
    \begin{align}\label{weightedwenteforvarphij}
        \int_{B_1}|x|^{2\alpha}|\nabla\varphi_j|^2\,dx&\leq\,C \int_{B_1}|x|^{2\alpha}|\nabla b_j|^2\,dx\int_{B_1}|\nabla a_j|^2\,dx\\=&\, C \int_{B_1}2^{-2j\alpha}|\nabla b_j|^2\,dx\int_{B_1}|\nabla a_j|^2\,dx 
    \end{align}
    which by summing over $j$ yields
    \begin{equation}
       \sum\limits_{j=0}^\infty  \int_{B_1}|x|^{2\alpha}|\nabla\varphi_j|^2\,dx\leq\,C \int_{B_1}|x|^{2\alpha}|\nabla b|^2\,dx\int_{B_1}|\nabla a|^2\,dx\,.
    \end{equation}
    To obtain the full weighted Wente inequality it then suffices to control the mixed terms
    \begin{equation}
        \sum\limits_{i\neq j}\int_{B_1}|x|^{2\alpha}\nabla\varphi_i\cdot\nabla\varphi_j\,dx\,,
    \end{equation}
    indeed
    \begin{equation}
        \int_{B_1}|x|^{2\alpha}|\nabla\varphi|^2\,dx =\sum\limits_{j=0}^\infty  \int_{B_1}|x|^{2\alpha}|\nabla\varphi_j|^2\,dx+\int_{B_1}\sum\limits_{i\neq j}\int_{B_1}|x|^{2\alpha}\nabla\varphi_i\cdot\nabla\varphi_j\,dx\,.
    \end{equation}
    In order to control the mixed terms, we will make use of the localization both in $b$ and in $a$. We show estimates for functions having Laplacians given by localized Jacobians, which will in particular apply to $\varphi_j$:
    \vspace{0.5cm}\\
Let $u$ be a solution of
\begin{equation}
    \Delta u = \nabla a\cdot\nabla^\perp b_j=div (a\nabla^\perp b_j)\,,
\end{equation}
and $spt(\nabla b_j)\subset B_{2^{-j}}\setminus B_{2^{-j-2}}=\tilde{A}_j$.
Consider
\begin{equation}
    \tilde{u}(x)=\frac{1}{2\pi}\int_{B_1}\log|x-y|\nabla a(y)\cdot\nabla^\perp b_j(y)\,dx\,.
\end{equation}
Then
\begin{equation}
    \Delta\tilde{u}=\nabla a\cdot\nabla^\perp b_j\, \quad in\quad B_1\,.
\end{equation}
For $x\in B_1\setminus B_{2^{-j+1}}$, using the notation $|x|=r$, choosing $c_j:=\fint_{B_{2^{-j}}}a\,dx$,
\begin{align}
    |\tilde{u}|(x)&\leq\int_{A_j}\frac{1}{|x-y|}|(a-c_j)||\nabla b_j|\,dx\\ &\leq C\frac{1}{|x|-2^{-j}}2^{-j}||\nabla a||_2||\nabla b_j||_2\\& = \frac{C}{|x|-2^{-j}}||\nabla a||_2||\,|x|\nabla b_j||_2\,.\label{linfinityestimatesfortildeu}
\end{align}
Multiplying by $|x|^\alpha$, for $x\in B_1\setminus B_{2^{-j+1}}$,
\begin{align}
    |x|^\alpha|\tilde{u}|&\leq \frac{C}{|x|-2^{-j}}|x|^\alpha2^{-j}||\nabla a||_2||\nabla b_j||_2\leq C\frac{|x|^\alpha2^{-j(1-\alpha)}}{|x|-2^{-j}}||\nabla a||_2||\,|x|^\alpha\nabla b_j||_2\\
    &\leq C\frac{r}{|x|-2^{-j}}\bigg(\frac{2^{-j}}{r}\bigg)^{(1-\alpha)}||\nabla a||_2||\,|x|^\alpha\nabla b_j||_2\,.\label{estimateintermediatefortildeujsup}
\end{align}
Applying the maximum principle for harmonic functions on $B_1$ to $v=\tilde{u}-u$ we see that
\begin{equation}\label{maximumprinciplebounds}
    |u|\leq |\tilde{u}|+C(B_1)||\nabla a||_2||\,|x|\nabla b_j||_2\,,
\end{equation}
so that for any $r\geq 2^{-j+1}$, by \eqref{estimateintermediatefortildeujsup} and \eqref{maximumprinciplebounds},
\begin{equation}\label{finalestimateweightedlinfinity}
    |x|^{2\alpha}| u|^2\leq C|x|^{2\alpha}|\tilde{u}|^2+C|x|^{2\alpha}||\nabla a||_2^2||\,|x|\nabla b_j||_2^2\leq C||\nabla a||_2^2||\,|x|^\alpha\nabla b_j||_2^2\,.
\end{equation}
On the other hand for $r\leq 2^{-j+1}$, from the standard Wente inequality we can conclude
\begin{equation}
    |x|^{2\alpha}|u|^2\leq C2^{-2j\alpha}||\nabla a||_2^2||\nabla b_j||_2^2.
\end{equation}
We have therefore shown
\begin{Lm}
    Let $u\in W^{1,2}_0(B_1)$ be the unique solution of 
     \begin{equation}
    \begin{cases}
        \Delta u =\nabla a\cdot\nabla^\perp b_j\quad&\text{on }B_1\\[.5em]
        \,\,\,\,u=0 \quad&\text{on }\partial B_1\,,
    \end{cases}
    \end{equation}
    with $spt(\nabla b_j)\subset\tilde{A}_j$. Then, for any $0\leq \alpha\leq 1$,
    \begin{equation}
        ||\,|x|^\alpha u||_{L^\infty(B_1)}\leq C||\nabla a||_2||\,|x|^\alpha\nabla b_j||_2\,.
    \end{equation}
\end{Lm}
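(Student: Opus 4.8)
The plan is to establish the pointwise bound $|x|^\alpha|u(x)|\leq C||\nabla a||_2||\,|x|^\alpha\nabla b_j||_2$ separately on the near field $B_{2^{-j+1}}$ and the far field $B_1\setminus B_{2^{-j+1}}$, the two regions being dictated by the scale $2^{-j}$ of the support annulus $\tilde A_j$, and then to take the supremum. The organizing principle is that on $spt(\nabla b_j)\subset\tilde A_j$ one has $|x|\sim 2^{-j}$, so that weighted and unweighted norms of $\nabla b_j$ are comparable up to a power of the scale, $||\,|x|^\alpha\nabla b_j||_2\sim 2^{-j\alpha}||\nabla b_j||_2$. This comparability is what lets me trade powers of $2^{-j}$ for the weight on the right-hand side, and it is crucial that the resulting constants be uniform in $j$.

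In the near field, where $r=|x|\leq 2^{-j+1}$, the weight is harmless since $|x|^\alpha\leq C\,2^{-j\alpha}$. I would therefore invoke the standard unweighted Wente $L^\infty$ estimate $||u||_{L^\infty(B_1)}\leq C||\nabla a||_2||\nabla b_j||_2$ for the Jacobian equation, and then convert $2^{-j\alpha}||\nabla b_j||_2$ into $||\,|x|^\alpha\nabla b_j||_2$ using the scale comparability above. This settles $B_{2^{-j+1}}$ at once.

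The far field is where the real work lies: there $|x|^\alpha$ can be as large as $1$, far exceeding the scale $2^{-j\alpha}$ of the source, so a naive weighting of Wente's inequality fails and one must use the explicit decay of the Newtonian potential away from $\tilde A_j$ to defeat the growth of the weight. I would work with the representative $\tilde u(x)=\frac{1}{2\pi}\int_{B_1}\log|x-y|\,\nabla a\cdot\nabla^\perp b_j\,dy$, write the source as $\text{div}(a\nabla^\perp b_j)$ so that an integration by parts replaces $a$ by $a-c_j$ with $c_j=\fint_{B_{2^{-j}}}a$, and apply Poincaré on $\tilde A_j$; since $|x-y|\geq|x|-2^{-j}$ on the support, this yields the pointwise decay \eqref{linfinityestimatesfortildeu}, and multiplication by $|x|^\alpha$ gives \eqref{estimateintermediatefortildeujsup}. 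The point is that on $B_1\setminus B_{2^{-j+1}}$ both prefactors $\frac{r}{|x|-2^{-j}}$ and $(2^{-j}/r)^{1-\alpha}$ are bounded by absolute constants — the first because $|x|-2^{-j}\geq r/2$ there, the second because $r\geq 2^{-j}$ and $1-\alpha\geq 0$ — so $|x|^\alpha|\tilde u|\leq C||\nabla a||_2||\,|x|^\alpha\nabla b_j||_2$. Finally I would pass from $\tilde u$ to $u$: the difference $v=\tilde u-u$ is harmonic on $B_1$, so by the maximum principle its supremum is attained on $\partial B_1$, where the far-field bound already applies; this is \eqref{maximumprinciplebounds}, and combining it with the weighted bound on $\tilde u$ produces \eqref{finalestimateweightedlinfinity}. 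Taking the maximum over the two regions gives the Lemma.

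The main obstacle is the endpoint behavior of the far-field estimate at $\alpha=1$, where the potential decay $2^{-j}/r$ and the weight growth $r$ balance exactly, so the bound is tight and leaves no slack; for $\alpha<1$ the surplus factor $(2^{-j}/r)^{1-\alpha}\leq 1$ provides a margin. Ensuring that the constants remain independent of $j$, which is exactly what makes the later summation over $j$ legitimate, rests on this balance together with the comparability $|x|\sim 2^{-j}$ on $spt(\nabla b_j)$.
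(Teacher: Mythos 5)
Your proposal is correct and follows essentially the same route as the paper: the near-field/far-field split at scale $2^{-j+1}$, the standard Wente estimate combined with the comparability $|x|\sim 2^{-j}$ on $spt(\nabla b_j)$ in the near field, the Newtonian potential bound via integration by parts, Poincar\'e, and the decay $|x-y|\geq |x|-2^{-j}$ in the far field, and the maximum principle for the harmonic difference $\tilde u - u$ to transfer the estimate back to $u$. Your observation about the exact balance at the endpoint $\alpha=1$ matches the role this plays in the paper as well.
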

\noindent Applying \eqref{finalestimateweightedlinfinity} to $\varphi_j$, which solves \eqref{varphijequationbothlocalized}, yields
\begin{equation}
        ||\,|x|^\alpha \varphi_j||_{L^\infty(B_1)}\leq C||\nabla a_j||_2||\,|x|^\alpha\nabla b_j||_2\,.
    \end{equation}
and by taking the sum we obtain,
\begin{align}
    |x|^{2\alpha}|\varphi|^2&\leq \sum\limits_{i,j=0}^\infty |x|^{2\alpha}|\varphi_i||\varphi_j|\leq C\sum\limits_{i,j=0}^\infty||\nabla a_i||_2||\,|x|^\alpha\nabla b_i||_2||\nabla a_j||_2||\,|x|^\alpha\nabla b_j||_2\\&\leq C\sum\limits_{i,j=0}^\infty||\nabla a_i||_2^2||\,|x|^\alpha\nabla b_j||_2^2+||\nabla a_j||_2^2||\,|x|^\alpha\nabla b_i||_2^2\leq C||\nabla a||_2^2||\,|x|^\alpha\nabla b||_2^2\,.
\end{align}
To summarize we proved the first part of the main Theorem \ref{wentetheorem},
    \begin{Lm}
        Let $\varphi\in W^{1,2}_0(B_1)$ solve
         \begin{equation*}
    \begin{cases}
        \Delta \varphi =\nabla a\cdot\nabla^\perp b\quad&\text{on }B_1\\[.5em]
        \,\,\,\,\varphi=0 \quad&\text{on }\partial B_1\,.
    \end{cases}
    \end{equation*}
     Then for all $0\leq\alpha\leq 1$,
         \begin{equation*}
            ||\,|x|^\alpha \varphi||_{L^\infty(B_1)}\leq C||\nabla a ||_{L^2(B_1)}||\,|x|^\alpha\nabla b ||_{L^2(B_1)}\,.
        \end{equation*}
    \end{Lm}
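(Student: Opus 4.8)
The plan is to reduce the global weighted sup estimate to a single-scale estimate via the dyadic decomposition of $\nabla b$, and then to recombine the pieces by a triangle-inequality-plus-Cauchy--Schwarz argument that exploits the near-orthogonality of the building blocks.

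First I would set up the reduction as above: take the dyadic decomposition $\nabla b=\sum_{j\geq 0}\nabla b_j$ with $spt(\nabla b_j)\subset\tilde A_j$ and $||\nabla b_j||_2^2\leq C\int_{\tilde A_j}|\nabla b|^2$, solve $\Delta\varphi_j=\nabla a\cdot\nabla^\perp b_j$ in $W^{1,2}_0(B_1)$ so that $\varphi=\sum_j\varphi_j$, and localize the potential by replacing $a$ with $a_j=\psi_j(a-c_j)$, which leaves the Jacobian unchanged in $L^1$ and, after Poincaré, gives $||\nabla a_j||_2^2\leq C\int_{C_j}|\nabla a|^2$. The decisive point of this reduction is that on $spt(\nabla b_j)$ the weight $|x|^\alpha$ is comparable to the constant $2^{-j\alpha}$, so that at each fixed scale the weight becomes harmless: $2^{-j\alpha}||\nabla b_j||_2\simeq||\,|x|^\alpha\nabla b_j||_2$.

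With this in hand I would invoke the single-bubble bound established above, namely $||\,|x|^\alpha\varphi_j||_{L^\infty(B_1)}\leq C||\nabla a_j||_2\,||\,|x|^\alpha\nabla b_j||_2$ uniformly in $j$. Its proof, which I regard as the genuine obstacle, splits $B_1$ at the scale $2^{-j+1}$: on $B_{2^{-j+1}}$ the weight is $\leq C2^{-j\alpha}$ and the standard Wente inequality \eqref{originalresultwente} suffices, while on $B_1\setminus B_{2^{-j+1}}$ the source sits far away, so one represents $\varphi_j$ through the Newtonian potential, rewrites its source as $\mathrm{div}\big((a-c_j)\nabla^\perp b_j\big)$ and integrates by parts onto the kernel $\log|x-y|$, and then uses $|x-y|\gtrsim|x|$ together with the Poincaré control $\fint_{B_{2^{-j}}}|a-c_j|^2\lesssim 2^{-2j}||\nabla a||_2^2$ to extract the decay $|x|^\alpha|\varphi_j(x)|\lesssim||\nabla a_j||_2\,||\,|x|^\alpha\nabla b_j||_2$; the harmonic correction enforcing the boundary condition is absorbed by the maximum principle. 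The subtlety is precisely that the weight $|x|^\alpha$ grows in the far region while $\varphi_j$ must be shown to decay, which a soft energy estimate cannot see and which forces the explicit representation together with the localization of $a$.

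Finally I would recombine. Since $|x|^\alpha|\varphi(x)|\leq\sum_j|x|^\alpha|\varphi_j(x)|\leq\sum_j||\,|x|^\alpha\varphi_j||_{L^\infty(B_1)}$ pointwise, the single-bubble bound gives $||\,|x|^\alpha\varphi||_{L^\infty(B_1)}\leq C\sum_j||\nabla a_j||_2\,||\,|x|^\alpha\nabla b_j||_2$, and Cauchy--Schwarz in $j$ bounds this by $C\big(\sum_j||\nabla a_j||_2^2\big)^{1/2}\big(\sum_j||\,|x|^\alpha\nabla b_j||_2^2\big)^{1/2}$. The finite overlap of the annuli $C_j$ and $\tilde A_j$ then yields $\sum_j||\nabla a_j||_2^2\leq C||\nabla a||_2^2$ and, using $|x|^\alpha\simeq 2^{-j\alpha}$ on each support, $\sum_j||\,|x|^\alpha\nabla b_j||_2^2\leq C\int_{B_1}|x|^{2\alpha}|\nabla b|^2=C||\,|x|^\alpha\nabla b||_2^2$, which is the assertion.
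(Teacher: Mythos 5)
Your proposal is correct and follows essentially the same route as the paper: the same dyadic decomposition of $\nabla b$ with the cut-off localization $a_j=\psi_j(a-c_j)$, the same single-scale weighted $L^\infty$ bound proved via the Newtonian potential representation with far-field decay plus the maximum principle for the harmonic correction, and the same recombination (your Cauchy--Schwarz in $j$ is exactly the paper's double-sum argument with the cross-paired AM--GM inequality). The only cosmetic difference is that the paper writes the recombination as a bound on $|x|^{2\alpha}|\varphi|^2$ through the double sum $\sum_{i,j}\|\nabla a_i\|_2\|\,|x|^\alpha\nabla b_i\|_2\|\nabla a_j\|_2\|\,|x|^\alpha\nabla b_j\|_2$ rather than as a single sum squared.
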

\noindent We now move on to the proof of the $W^{1,2}$ weighted Wente estimates. We can obtain pointwise estimates for the gradient of $\tilde{u}$ in the region $x\in B_1\setminus B_{2^{-j+1}}$ directly from the definition of $\tilde{u}$,
\begin{align}
    |\nabla\tilde{u}|(x)&\leq\int_{B_{2^{-j}}}\frac{1}{|x-y|^2}|(a-c_j)||\nabla b_j|\,dx\\ &\leq C\frac{1}{(|x|-2^{-j})^2}2^{-j}||\nabla a||_2||\nabla b_j||_2\\& = \frac{C}{(|x|-2^{-j})^2}||\nabla a||_2||\,|x|\nabla b_j||_2\,,
\end{align}
which multiplied by the weight $|x|^\alpha$, gives for $r>2^{-j+1}$
\begin{align}
    |x|^\alpha|\nabla \tilde{u}|(x)&\leq  C\frac{|x|^\alpha 2^{-j(1-\alpha)}}{(|x|-2^{-j})^2}||\nabla a||_2||\,|x|^\alpha\nabla b_j||_2\\ &\leq C\frac{r}{(|x|-2^{-j})^2}\bigg(\frac{2^{-j}}{r}\bigg)^{1-\alpha}||\nabla a||_2||\,|x|^\alpha \nabla b_j||_2\\&\leq C\frac{1}{r}\bigg(\frac{2^{-j}}{r}\bigg)^{1-\alpha}||\nabla a||_2||\,|x|^\alpha \nabla b_j||_2\,.
\end{align}
For the annulus $A_k:=B_{2^{-k}}\setminus B_{2^{-k-1}}$, with $k<j-1$,
\begin{equation}
 \int_{A_k}|x|^{2\alpha}|\nabla\tilde{u}|^2\,dx\leq C2^{-2(j-k)(1-\alpha)}||\nabla a_j||_2^2||\,|x|^\alpha\nabla b_j||_2^2\,,
\end{equation}
and summing over $k$,
\begin{align}
    &\int_{B_1\setminus B_{2^{-j+1}}}|x|^{2\alpha}|\nabla\tilde{u}|^2 \leq \,C||\nabla a||_2^2||\,|x|^\alpha\nabla b_j||_2^2\sum\limits_{k=0}^{j-2}2^{-2(1-\alpha)(j-k)}\\\leq&\,\begin{cases}
        C||\nabla a||_2^2||\,|x|^\alpha\nabla b_j||_2^2\quad \text{for}\quad 0<\alpha<1\\[.5em]Cj||\nabla a||_2^2||\,|x|\nabla b_j||_2^2\leq C||\nabla a||_2^2||r\sqrt{|\log(r)|}\nabla b_j||_2^2\quad \text{for}\quad \alpha=1\,.\label{partialgradientestimateinl2}
    \end{cases}
\end{align}
Recall that
\begin{equation}
    ||\tilde{u}||_{L^\infty(\partial B_1)}\leq C||\nabla a||_2||\,|x|\nabla b_j||_2 \,,
\end{equation}
and hence for the harmonic difference $\tilde{u}-u$,
\begin{equation}
    ||\nabla (\tilde{u}-u)||_{L^\infty(B_\eta)}\leq C_\eta||\nabla a||_2||\,|x|\nabla b_j||_2\,.
\end{equation}
In particular,
\begin{equation}
    \int_{B_{1/2}\setminus B_{2^{-j+1}}}|x|^{2\alpha}|\nabla u|^2\,dx\leq \int_{B_{1/2}\setminus B_{2^{-j+1}}}|x|^{2\alpha}|\nabla \tilde{u}|^2\,dx+C||\nabla a||_2^2||\,|x|\nabla b_j||_2^2\,.
\end{equation}
Moreover,
\begin{equation}
    \sup\limits_{B_1}|\tilde{u}-u|\leq C||\nabla a||_2||\,|x|\nabla b_j||_2\,,
\end{equation}
so that
\begin{equation}
    \int_{B_1\setminus B_{1/2}}|x|^{2\alpha}|\nabla u|^2\,dx\leq \int_{\partial B_{1/2}}|u||\nabla u|\,dx\leq C||\nabla a||_2^2||\,|x|\nabla b_j||_2^2\,.
\end{equation}
It therefore suffices to prove for some $f:(0,\frac{1}{2})\to\mathbb{R}_+$, with $|x|^\alpha\leq Cf(r)$ that
\begin{equation}\label{eqneededwithweightrhs}
    \int_{B_1\setminus B_{2^{-j+1}}}|x|^{2\alpha}|\nabla\tilde{u}|^2\,dx\leq C||\nabla a||_2^2||f(r)\nabla b_j||_2^2
\end{equation}
to obtain
\begin{equation}
    \int_{B_1\setminus B_{2^{-j+1}}}|x|^{2\alpha}|\nabla u|^2\,dx\leq C||\nabla a||_2^2||f(r)\nabla b_j||_2^2\,,
\end{equation}
which together with 
\begin{equation}
    \int_{ B_{2^{-j+1}}}|x|^{2\alpha}|\nabla u|^2\,dx\leq C2^{-2j\alpha}||\nabla a||_2^2||\nabla b_j||_2^2\leq C||\nabla a||_2^2||\,|x|^\alpha\nabla b_j||_2^2\,,
\end{equation}
which follows from the standard Wente inequality would imply
\begin{equation}
    \int_{B_1}|x|^{2\alpha}|\nabla u|^2\,dx\leq C||\nabla a||_2^2||f(r)\nabla b_j||_2^2\,.
\end{equation}
In \eqref{partialgradientestimateinl2} we have shown that \eqref{eqneededwithweightrhs} hold for $f(r)=r\sqrt{|\log(r)|}$ in the case $\alpha =1$ and $f(r)=|x|^\alpha$ in the case $0<\alpha<1$. We conclude:
\begin{Lm}\label{intermediatelemma2}
    Let $u\in W^{1,2}_0(B_1)$ be the solution of the localized Wente equation
     \begin{equation*}
    \begin{cases}
        \Delta u =\nabla a\cdot\nabla^\perp b_j\quad&\text{on }B_1\\[.5em]
        \,\,\,\,u=0 \quad&\text{on }\partial B_1\,,
    \end{cases}
    \end{equation*}
    $a,b_j\in W^{1,2}(B_1)$ and $spt(\nabla b_j)\subset\tilde{A_j}$. Then for all $0<\alpha<1$ there exists a constant $C_\alpha>0$ such that
    \begin{equation*}
        \int_{B_1}|x|^{2\alpha}|\nabla u|^2\,dx\leq C_\alpha \int_{B_1}|\nabla a|^2\,dx\int_{B_1}|x|^{2\alpha}|\nabla b_j|^2\,dx.
    \end{equation*}
    Moreover there exists a universal constant $C>0$, such that
     \begin{equation*}
        \int_{B_1}|x|^{2}|\nabla u|^2\,dx\leq C \int_{B_1}|\nabla a|^2\,dx\int_{B_1}|x|^{2}|\log|x||\,|\nabla b_j|^2\,dx.
    \end{equation*}
\end{Lm}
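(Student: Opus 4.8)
The plan is to compare $u$ with the explicit Newtonian potential $\tilde u(x)=\frac{1}{2\pi}\int_{B_1}\log|x-y|\,\nabla a\cdot\nabla^\perp b_j\,dy$, which has the same Laplacian, and to split $\int_{B_1}|x|^{2\alpha}|\nabla u|^2$ over three regions: the exterior $B_1\setminus B_{2^{-j+1}}$, the inner ball $B_{2^{-j+1}}$ containing $\mathrm{spt}(\nabla b_j)$, and a collar near $\partial B_1$ where the harmonic correction $v=\tilde u-u$ must be handled. On the exterior, the kernel $|x-y|^{-2}$ obtained by differentiating $\tilde u$ never degenerates, since $y\in\tilde A_j\subset B_{2^{-j}}$ while $r=|x|\ge 2^{-j+1}$; combining this with the Cauchy--Schwarz and Poincaré bound $\int_{\tilde A_j}|a-c_j||\nabla b_j|\le C\,2^{-j}\|\nabla a\|_2\|\nabla b_j\|_2$ (with $c_j=\fint_{B_{2^{-j}}}a$) gives the pointwise estimate $|\nabla\tilde u|(x)\le C(|x|-2^{-j})^{-2}\,\|\nabla a\|_2\,\|\,|x|\nabla b_j\|_2$. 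Since $|x|-2^{-j}\ge r/2$ there, inserting the weight yields $|x|^{\alpha}|\nabla\tilde u|(x)\le C\,r^{-1}(2^{-j}/r)^{1-\alpha}\|\nabla a\|_2\,\|\,|x|^{\alpha}\nabla b_j\|_2$, as already recorded above.

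The core of the proof is the dyadic summation. Squaring this pointwise bound and integrating over the annulus $A_k$ (area $\sim 2^{-2k}$, on which $r\sim 2^{-k}$) produces a contribution $\sim 2^{-2(j-k)(1-\alpha)}\|\nabla a\|_2^2\,\|\,|x|^{\alpha}\nabla b_j\|_2^2$ for each $k<j-1$. The dichotomy of the statement is exactly the behaviour of $\sum_{k=0}^{j-2}2^{-2(1-\alpha)(j-k)}$: for $0<\alpha<1$ this is a convergent geometric series with sum bounded by a constant $C_\alpha$, yielding the first inequality; for $\alpha=1$ every summand equals $1$ and there are $\sim j$ of them, producing the factor $j$. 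Since $\nabla b_j$ is supported where $r\sim 2^{-j}$, one has $j\sim|\log|x||$ on that support, so $j\,\|\,|x|\nabla b_j\|_2^2\le C\,\|\,|x|\,|\log|x||^{1/2}\nabla b_j\|_2^2$, which is precisely the improved weight in the second inequality.

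It then remains to dispose of the correction and the two end regions with the same right-hand side. The harmonic function $v=\tilde u-u$ has Dirichlet data controlled by the boundary $L^\infty$ bound $\|\tilde u\|_{L^\infty(\partial B_1)}\le C\|\nabla a\|_2\,\|\,|x|\nabla b_j\|_2$ established above, so interior gradient estimates for harmonic functions give $\|\nabla v\|_{L^\infty(B_{1/2})}\le C\|\nabla a\|_2\,\|\,|x|\nabla b_j\|_2$ and hence a weighted energy over $B_{1/2}$ bounded by $C\|\nabla a\|_2^2\,\|\,|x|\nabla b_j\|_2^2$. On the collar $B_1\setminus B_{1/2}$ the weight is comparable to $1$ and an integration by parts reduces $\int|x|^{2\alpha}|\nabla u|^2$ to $\int_{\partial B_{1/2}}|u||\nabla u|$, again controlled by the uniform bound on $u$. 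Finally, on the inner ball $B_{2^{-j+1}}$ the standard (unweighted) Wente inequality combined with $|x|^{2\alpha}\le C2^{-2j\alpha}\sim\min_{\mathrm{spt}(\nabla b_j)}|x|^{2\alpha}$ gives $\int_{B_{2^{-j+1}}}|x|^{2\alpha}|\nabla u|^2\le C2^{-2j\alpha}\|\nabla a\|_2^2\|\nabla b_j\|_2^2\le C\|\nabla a\|_2^2\,\|\,|x|^{\alpha}\nabla b_j\|_2^2$. Summing the three regional contributions proves the lemma.

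The main obstacle is the endpoint $\alpha=1$. For $\alpha<1$ every region is dominated by a convergent geometric series and the clean weight $|x|^{2\alpha}$ survives; at $\alpha=1$ the series degenerates into $\sim j$ equal terms, and the delicate point is to check that this forces only a single power of $|\log|x||$, so that the weight $|x|^2|\log|x||$---rather than $|x|^2|\log|x||^{1+\varepsilon}$---is enough. A secondary care is that the correction term and the collar only carry the weaker weight $|x|$; one must note that $\|\,|x|\nabla b_j\|_2^2\le C\,\|\,|x|\,|\log|x||^{1/2}\nabla b_j\|_2^2$, so these pieces are still absorbed by the stated right-hand side without losing the sharp logarithmic power, whose optimality is the content of Theorem~2.
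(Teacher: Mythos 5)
Your proposal is correct and takes essentially the same route as the paper's proof: comparison with the Newtonian potential $\tilde u$, the pointwise kernel estimate $|x|^\alpha|\nabla\tilde u|\lesssim r^{-1}(2^{-j}/r)^{1-\alpha}$ on the exterior, dyadic summation over the annuli $A_k$ producing a convergent geometric series for $\alpha<1$ versus $\sim j$ equal terms absorbed into $|\log|x||$ for $\alpha=1$, the harmonic correction $\tilde u - u$ handled by the boundary $L^\infty$ bound and interior gradient estimates, the collar $B_1\setminus B_{1/2}$ by integration by parts, and the inner ball by the standard Wente inequality. The one caveat---that absorbing the correction and collar terms via $\|\,|x|\nabla b_j\|_2\le C\|\,|x|\,|\log|x||^{1/2}\nabla b_j\|_2$ requires $\mathrm{spt}(\nabla b_j)\subset B_{1/2}$, i.e. $j\ge 1$---is an implicit restriction equally present in the paper's own argument (where the auxiliary weight $f$ is only defined on $(0,\tfrac12)$).
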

\noindent We can now conclude the proof of Theorem \ref{wentetheorem} by applying Lemma \ref{intermediatelemma2} to each $\varphi_j$:
    \begin{Lm}
        Let $\varphi\in W^{1,2}_0(B_1)$ solve
         \begin{equation*}
    \begin{cases}
        \Delta \varphi =\nabla a\cdot\nabla^\perp b\quad&\text{on }B_1\\[.5em]
        \,\,\,\,\varphi=0 \quad&\text{on }\partial B_1\,,
    \end{cases}
    \end{equation*}
    Then for all $0<\alpha<1$ there exists $C_\alpha>0$, such that
         \begin{equation*}
            ||\,|x|^\alpha\nabla\varphi||_{L^2(B_1)}\leq C||\nabla a||_{L^2(B_1)}||\,|x|^\alpha\nabla b||_{L^2(B_1)}\,.
        \end{equation*}
        Moreover there exists a universal constant $C>0$, such that
        \begin{equation}\label{weightedwentewithextralog}
            ||\,|x|\nabla\varphi||_{L^2(B_1)}\leq C||\nabla a||_{L^2(B_1)}||r\sqrt{|\log|x||}\nabla b||_{L^2(B_1)}\,.
        \end{equation}       
    \end{Lm}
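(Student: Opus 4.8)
The plan is to deduce the statement for $\varphi$ from the single-piece estimate of Lemma \ref{intermediatelemma2} by summing over the dyadic decomposition, exactly in the spirit of the $L^\infty$ argument carried out above. Recall $\varphi=\sum_{j\geq 0}\varphi_j$, where each $\varphi_j$ solves the doubly localized equation \eqref{varphijequationbothlocalized}, $\Delta\varphi_j=\nabla a_j\cdot\nabla^\perp b_j$, with $\nabla b_j$ supported in $\tilde A_j$ and $a_j=\psi_j(a-c_j)$ supported in $C_j$. Once the estimates below show $\sum_j ||\,|x|^\alpha\nabla\varphi_j||_2<\infty$, the weighted gradient series converges in $L^2$ and Minkowski's inequality yields
\[
||\,|x|^\alpha\nabla\varphi||_{L^2(B_1)}\leq \sum_{j\geq 0}||\,|x|^\alpha\nabla\varphi_j||_{L^2(B_1)}.
\]

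First I would apply Lemma \ref{intermediatelemma2} to each $\varphi_j$, with the role of $a$ played by the localized $a_j$ (legitimate since $\nabla a\cdot\nabla^\perp b_j\overset{L^1}{=}\nabla a_j\cdot\nabla^\perp b_j$). This gives, for $0<\alpha<1$,
\[
||\,|x|^\alpha\nabla\varphi_j||_2\leq C_\alpha\,||\nabla a_j||_2\,||\,|x|^\alpha\nabla b_j||_2,
\]
and for $\alpha=1$,
\[
||\,|x|\nabla\varphi_j||_2\leq C\,||\nabla a_j||_2\,||\,r\sqrt{|\log|x||}\,\nabla b_j||_2,
\]
where $||\nabla a_j||_2^2\leq C\int_{C_j}|\nabla a|^2$ is the localization bound already established for \eqref{wenteforphijlocalized}. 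The decisive observation — the one step that must be set up correctly — is that each right-hand side is a \emph{product} of two quantities that are \emph{separately} square-summable in $j$. I would therefore apply Cauchy--Schwarz in the index $j$,
\[
\sum_j ||\nabla a_j||_2\,||\,|x|^\alpha\nabla b_j||_2\leq\Big(\sum_j||\nabla a_j||_2^2\Big)^{1/2}\Big(\sum_j||\,|x|^\alpha\nabla b_j||_2^2\Big)^{1/2},
\]
and analogously with the log-weighted $b$-factor when $\alpha=1$. It is precisely here that the localization of $a$ is indispensable: keeping $||\nabla a||_2$ global on every term would leave the bare $\ell^1$ sum $\sum_j||\,|x|^\alpha\nabla b_j||_2$, which need not converge even though $||\,|x|^\alpha\nabla b||_2<\infty$.

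Finally I would collapse both factors using the bounded overlap of the covering. The supports $C_j=\mathrm{spt}(\psi_j)$ meet with bounded multiplicity, so $\sum_j||\nabla a_j||_2^2\leq C\sum_j\int_{C_j}|\nabla a|^2\leq C||\nabla a||_2^2$. For the $b$-factor, $\nabla b_j$ lives on $\tilde A_j$, where the weight $|x|^{2\alpha}$ (resp. $|x|^2|\log|x||$) is comparable to its constant dyadic value $2^{-2j\alpha}$ (resp. $2^{-2j}j$); combined with $\int|\nabla b_j|^2\leq C\int_{\tilde A_j}|\nabla b|^2$ from \eqref{l2boundforbj} and the twofold overlap of the $\tilde A_j$, this gives $\sum_j||\,|x|^\alpha\nabla b_j||_2^2\leq C||\,|x|^\alpha\nabla b||_2^2$ and likewise $\sum_j||\,r\sqrt{|\log|x||}\nabla b_j||_2^2\leq C||\,r\sqrt{|\log|x||}\nabla b||_2^2$. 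Chaining the three displays produces both asserted inequalities. I expect no genuine obstacle beyond this bookkeeping; the only real subtlety, already flagged, is that one must retain the localization of \emph{both} $a$ and $b$ so that the $j$-summation factors into a product of two $\ell^2$ norms rather than collapsing to a single, possibly divergent, $\ell^1$ sum.
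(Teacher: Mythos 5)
Your proposal is correct and takes essentially the same route as the paper: decompose $\varphi=\sum_j\varphi_j$, apply Lemma \ref{intermediatelemma2} to each doubly localized piece $\Delta\varphi_j=\nabla a_j\cdot\nabla^\perp b_j$, and collapse the resulting sums via the bounded overlap of the supports $C_j$ and $\tilde A_j$. The only cosmetic difference is that the paper expands $||\,|x|^\alpha\nabla\varphi||_{L^2}^2$ as a double sum over $i,j$ and decouples it with Young's inequality, which is precisely your Minkowski-plus-Cauchy--Schwarz-in-$j$ step in disguise.
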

\begin{proof}
    By the decomposition introduced above, and Lemma \ref{intermediatelemma2} applied to each $\varphi_i$,
    \begin{align}
||\,|x|^\alpha\nabla\varphi||_{L^2}^2\leq &C\sum\limits_{i,j=0}^\infty||\,|x|^\alpha\nabla\varphi_i||_{L^2}||\,|x|^\alpha\nabla\varphi_j||_{L^2}\\\leq &\begin{cases}
    C\sum\limits_{i,j=0}^\infty||\nabla a_i||_2||\,|x|^\alpha\nabla b_i||_2||\nabla a_j||_2||\,|x|^\alpha\nabla b_j||_2\quad\text{for}\,0<\alpha<1\\[.5em]
    C\sum\limits_{i,j=0}^\infty||\nabla a_i||_2||r\sqrt{\log(r)}\nabla b_i||_2||\nabla a_j||_2||r\sqrt{\log(r)}\nabla b_j||_2\quad\text{for}\, \,\,\alpha=1
\end{cases}\nonumber\\
\leq &\begin{cases}
    C_\alpha||\nabla a||_2^2||\,|x|^\alpha \nabla b||_2^2\quad\text{for}\,0<\alpha<1\\[.5em]
      C||\nabla a||_2^2||r\sqrt{\log(r)} \nabla b||_2^2\quad\text{for}\, \,\,\alpha=1\,.
\end{cases}
    \end{align}
    This concludes the proof of the Lemma.
\end{proof}
\noindent We concluded the first part of the paper, namely the proof of Theorem \ref{wentetheorem}.
\end{proof}
\section*{The case $\alpha=1$}
The goal of this section is to study the case $\alpha=1$, in particular showing in Theorem 2 that the weight $|x|^2\log|x|$ obtained in Theorem \ref{wentetheorem} is optimal.\\[.5em]
We start by making the following observation: assume $\varphi\in C^0(B_1)\cap W^{1,2}_0(B_1)$, $\Delta \varphi=0$ in $B_1\setminus B_s$ and $\Delta\varphi=\nabla a\cdot\nabla^\perp b$ in $B_s$ as well as $\partial_r\varphi=\partial_{-r}\varphi$ on $\partial B_s$, then 
\begin{equation}
    \Delta\varphi=\nabla a\cdot\nabla^\perp b\quad \text{in}\,\,\, B_1\,.
\end{equation}
Indeed, for $\psi\in W^{1,2}_0(B_1)$,
\begin{align}
    \int_{B_1}\nabla \varphi \nabla\psi\,dx&=\int_{\partial(B_1\setminus B_s)}\partial_\nu\varphi\psi\,dx-\int_{B_s}\nabla a\cdot\nabla^\perp b \psi\,dx+\int_{\partial B_s}\partial_\nu\varphi\psi\,dx\\
    &=-\int_{B_s}\nabla a\cdot\nabla^\perp b\psi\,dx+\int_{\partial B_s}(\partial_r\varphi-\partial_{-r}\varphi)\psi\,dx\\
    &=-\int_{B_s}\nabla a\cdot\nabla^\perp b\psi\,dx\,.
\end{align}
Consider now the following harmonic function on $B_1\setminus \{0\}$:
\begin{equation}
    h(x)=\frac{x}{|x|^2}-x=\partial_x\log(|x|)-x\,.
\end{equation}
Notice that this function satisfies
\begin{equation}
    \partial_rh(x)=-\frac{x(1+|x|^2)}{|x|^3},
\end{equation}
and hence $\int_{\partial B_s}\partial_rh\,dx=0$ for all $0<s<1$, which is a necessary condition in order to be a solution to a Wente type equation. The gradient of $h$ is given by $|\nabla h|^2=|x|^{-4}+1$, and hence
\begin{equation}\label{growthofunweightedharmonicpart}
    \int_{B_1\setminus B_s}|\nabla h|^2\,dx\leq Cs^{-2}\,,
\end{equation}
for all $\beta\in(0,1)$,
\begin{equation}\label{growthofweightedharmonicpartbeta}
    \int_{B_1\setminus B_s}|x|^{2\beta}|\nabla h|^2\,dx\leq Cs^{-2+2\beta}\,,
\end{equation}
and
\begin{equation}\label{growthofcriticallyweightedharmonicpart}
    \int_{B_1\setminus B_{s}}|x|^2|\nabla h|^2\,dx\geq\int_{B_1\setminus B_{s}}\frac{x^2}{|x|^4}\,dx=\,C\int_0^{2\pi}\cos^2(\varphi)\,d\varphi\int_{s}^{1}\frac{1}{r}dr=\,C |\log(s)|\,.
\end{equation}
We produce a counterexample to the weights $|x|^2$ and $|x|^2|\log|x||^{1-\epsilon}$ in the weighted Wente inequality, by gluing a function $f$ on $B_{s}$ solving a Wente equation to $h$.\\[.5em]
Let $f=x|x|^\alpha$. Then
\begin{equation}
    \partial_rf=\frac{x}{|x|}|x|^\alpha(1+\alpha)\,,
\end{equation}
and
\begin{equation}
    \Delta f = 2\partial_x|x|^\alpha+x\Delta |x|^\alpha=x|x|^{\alpha-2}\alpha(\alpha+2)=\nabla a_\alpha\cdot\nabla^\perp b\,,
\end{equation}
for $b=y$ and $a_\alpha=(\alpha+2)|x|^\alpha$, which gives $\partial_x a_\alpha= \alpha x|x|^{\alpha-2}(\alpha+2)$.\\
In order to glue $h$ and $K(s,\alpha)f$ continuously along $\partial B_s$ the following needs to be satisfied:
\begin{equation}
    \frac{1}{s^2}-1=K(s,\alpha)s^{\alpha}\implies K(s,\alpha)=s^{-\alpha-2}(1-s^2)\,,
\end{equation}
the normal derivatives then satisfy on $\partial B_s$,
\begin{equation}
    \partial_rK(s,\alpha)f=xs^{\alpha-1}(1+\alpha)s^{-\alpha-2}(1-s^2)=x(1+\alpha)s^{-3}(1-s^2)\,.
\end{equation}
On the other hand, still on $\partial B_s$,
\begin{equation}
    \partial_{-r }h= xs^{-3}{(1+s^2)}
\end{equation}
and in order for the gluing to satisfy $\partial_r=\partial_{-r}$ on $\partial B_s$ we need,
\begin{equation}
    1+s^2=(1+\alpha)(1-s^2)\,,
\end{equation}
which is satisfied for $s_\alpha^2=\frac{\alpha}{2+\alpha}$. We define
\begin{equation}
    h_\alpha=\begin{cases}
        h\quad  &\text{on} \,\,B_{1}\setminus B_{s_\alpha}\\[.5em]
        K(s_\alpha,\alpha)f\quad &\text{on}\,\, B_{s_\alpha}\,.
    \end{cases}
\end{equation}
Then, by the previous discussion
\begin{equation}
    \Delta h_\alpha=\nabla \tilde{a}_\alpha\cdot\nabla^\perp \tilde{b}_\alpha
\end{equation}
for $\tilde{b}=y$ and $\tilde{a}_\alpha=K(s_\alpha,\alpha)(\alpha+2)|x|^\alpha$ on $B_{s_\alpha}$ and $\tilde{a}_\alpha=K(s_\alpha,\alpha)(\alpha+2)s_\alpha^\alpha$ on $B_1\setminus B_{s_\alpha}$. A direct computation shows:

\begin{align}  &\int_{B_1}|\nabla\tilde{a}_\alpha|^2\,dx=\int_{B_{s_\alpha}}K(s_\alpha,\alpha)^2|\nabla a_\alpha|^2\,dx\\=&\int_{B_{s_\alpha}}s_\alpha^{-2\alpha-4}(1-s_\alpha^2)^2\alpha^2|x|^{2\alpha-2}(\alpha+2)^2\,dx\\
=&\,Cs_\alpha^{-2\alpha-4}(1-s_\alpha^2)^2\alpha^2(\alpha+2)^2\int_0^{s_\alpha}|x|^{2\alpha-1}\,dx\\
=&\frac{Cs_\alpha^{-2\alpha-4}(1-s_\alpha^2)^2\alpha^2(\alpha+2)^2}{2\alpha}s_\alpha^{2\alpha}\\
=&\,C\alpha(\alpha+2)^2 s_\alpha^{-4}(1-s_\alpha^2)^2\\
=&\,C\alpha \frac{(2+\alpha)^2}{\alpha^2}\frac{4(2+\alpha)^2}{(2+\alpha)^2} \\ =&\,C\frac{(2+\alpha)^2}{\alpha}=\,C\frac{2+\alpha}{s_\alpha^2}\to\infty \quad\text{as}\,\,\alpha\to 0\,,
\end{align}
which is expected given the standard Wente estimate and the growth rate we computed in \eqref{growthofunweightedharmonicpart}. Similarly from the computations above we deduce
\begin{equation}
    \int_{B_{s_\alpha}\setminus B_{s_\alpha/2}}|\nabla\tilde{a}_\alpha|^2\,dx=\,C\,\frac{2+\alpha}{s_\alpha^2}\,,
\end{equation}
and hence for $\beta\in(0,1)$
\begin{equation}
    \int_{B_1}|x|^{2\beta}|\nabla\tilde{a}_\alpha|^2\,dx\geq C(2+\alpha)s_\alpha^{2\beta-2}\,,
\end{equation}
which is consistent with \eqref{growthofweightedharmonicpartbeta} and the main Theorem \ref{wentetheorem}.
On the other hand,
\begin{equation}
    \int_{B_1}|x|^{2}|\nabla\tilde{a}_\alpha|^2\,dx\leq C,
\end{equation}
which shows that the case $\alpha=1$ with the weight $|x|^2$ cannot hold. In fact more can be said, for any $\beta\in (0,1)$ and $s_\alpha$ small enough,
\begin{equation}
    \int_{B_1}|x|^2|\log(|x|)|^\beta|\nabla\tilde{a}_\alpha|^2\,dx\leq Cs_\alpha^2|\log(s_\alpha)|^\beta\int_{B_1}|\nabla\tilde{a}_\alpha|^2\,dx\leq C|\log(s_\alpha)|^\beta
\end{equation}
and hence a Weighted Wente inequality of the form $||\,|x|\nabla\varphi||_2\leq||\nabla b||_2||r|\log(r)|^{\beta/2}\nabla a||_2$ cannot hold for any $\beta <1$. This shows that our main Theorem \ref{wentetheorem} is optimal, and
\begin{equation*}
        \forall \beta<1:\quad\sup\limits_{\varphi\in\mathcal{A}}\frac{||\,|x|\nabla\varphi||_2}{||\nabla a||_2||\,|x|\,|\log|x||^{\beta/2}\nabla b||_2}=\infty\,,
    \end{equation*}
which also concludes the proof of Theorem 2.
    \newpage

\end{document}